\documentclass[12pt, reqno,oneside]{amsart}
\usepackage[utf8]{inputenc}

\usepackage{amssymb, amsmath, amsthm, wasysym, mathrsfs}
\usepackage{hyperref}
\usepackage[alphabetic,lite]{amsrefs}
\usepackage{fullpage}
\usepackage{setspace}
\usepackage{mathtools}
\usepackage{enumitem}

\usepackage{ bbold }

\setstretch{1.15}

\usepackage[usenames,dvipsnames]{color}
%



\newtheorem{lemma}{Lemma}[section]
\newtheorem{lemma*}{Lemma}
\newtheorem{theorem}[lemma]{Theorem}

\newtheorem{question}[lemma]{Question}
\newtheorem{claim*}{Claim}

\theoremstyle{definition}


\newcommand{\Z}{{\mathbb Z}}





\numberwithin{equation}{section}
\numberwithin{table}{section}
\setcounter{tocdepth}{1}

\title{Characterizing Candidates for Cannon's Conjecture from Geometric Measure Theory}
\author{Tamunonye Cheetham-West, Alexander Nolte$^1$ \\ {Rice University}}
\thanks{\hspace{-0.5cm}$^1$This material is based upon work supported by the National Science Foundation under Grant No. 1842494 and Grant No. 2005551}
\begin{document}
\bibliographystyle{plain}

\maketitle
\begin{abstract}
    We show that recent work of Song implies that torsion-free hyperbolic groups with Gromov boundary $S^2$ are realized as fundamental groups of closed $3$-manifolds of constant negative curvature if and only if the solution to an associated spherical Plateau problem for group homology is isometric to such a $3$-manifold, and suggest some related questions.
\end{abstract}

\section{Introduction}

Cannon's conjecture \cite{cannon1998conjecture} states that a Gromov-hyperbolic group $\Gamma$ with $\partial \Gamma \cong S^2$ that acts effectively on its boundary is a Kleinian group. For torsion-free groups $\Gamma$, this is equivalent to the existence of a closed orientable hyperbolic $3$-manifold $Y$ so that $\pi_1(Y) \cong \Gamma$. 

Cannon's conjecture was first suggested as part of a program to prove Thurston's hyperbolization conjecture \cite{Thurston1982Geometrization}, but remains open despite Perelman's proof of the Geometrization Theorem \cite{perelman2002entropy}, \cite{perelman2003extinction}, \cite{perelman2003surgery}. A large body of work surrounds Cannon's conjecture including, for instance, an equivalent condition in terms of $\varepsilon$-Fuchsian subgroups due to Markovi\'c \cite{Markovic2013criterion}, a formulation in terms of the Ahlfors regular conformal dimension of the Gromov boundary by Bonk and Kleiner \cite{bonkkleiner2005}, and a resolution of analogous questions in some larger dimensions due to Bartels, L\"uck, and Weinberger \cite{bartels2010hyperbolic}.

In this brief note, we present an equivalent formulation of Cannon's conjecture for torsion-free groups $\Gamma$ (Theorem \ref{headline-result}) in terms of the structure and regularity of solutions to a minimization problem studied in recent work of Song \cite{song2022plateau}. In this view, the challenge of Cannon's conjecture is changed from producing a distinguished action on $\mathbb{H}^3$ from a vast family to an analysis of specific candidate geometric spaces that are characterizing in the following sense. If the candidate space for a particular torsion-free group $\Gamma$ is a solution to Cannon's conjecture for $\Gamma$, then by Mostow Rigidity it is the unique such solution up to isometry; if it is not, then no solution to Cannon's conjecture exists for $\Gamma$.

We work in the framework of the spherical Plateau problem, as described in \cite{song2022plateau} (see also Section \ref{spherical-plateau-summary}). Specifically, given a torsion-free countable group $\Gamma$, a Hilbert manifold classifying space $\mathcal{S}(\Gamma)/\Gamma$ for $\Gamma$ is constructed with Hilbert-Riemannian metric $g_{\text{Hil}}$. The group homology $H_*(\Gamma, \mathbb{Z})$ is then identified with the singular homology of $\mathcal{S}(\Gamma)/\Gamma$. For $h \in H_k(\Gamma, \mathbb{Z})$, a solution to the spherical Plateau problem for $h$ is an integral current space $(X_\infty,d_\infty, S_\infty)$ that is the intrinsic-flat limit (see \cite{sormani2011intrinsic}) of a sequence $C_i$ of $k$-dimensional integral currents in the sense of Ambrosio and Kirchheim \cite{ambrosio2000currents} in $\mathcal{S}(\Gamma)/\Gamma$ representing $h$ whose mass approaches the infimum of the masses of such currents. The immediately relevant feature of integral current spaces here is that $(X_\infty, d_\infty)$ is a complete metric space. Solutions to the spherical Plateau problem always exist as a consequence of Wenger compactness \cite{wenger2011compactness}.

The observation of this note is that the results in \cite{song2022plateau} and work of Bestvina and Mess \cite{bestvina1991groups} imply that solutions to appropriate spherical Plateau problems provide distinguished candidates for Cannon's conjecture, constructed using only the group $\Gamma$. This is in line with the theme (e.g. \cite{meeks1982plateau}, \cite{perelman2003surgery}, \cite{hass2983slice}) that geometric-analytic methods can often furnish fruitful candidates in existence questions in geometry and topology.

\begin{theorem}\label{headline-result}
    Let $\Gamma$ be a torsion-free hyperbolic group with $\partial \Gamma \cong S^2$ that acts effectively on its boundary, let $(X_\infty, d_\infty, S_\infty)$ be any solution of the spherical Plateau problem for a generator $h_\Gamma$ of $H_3(\Gamma, \mathbb{Z}) \cong \mathbb{Z}$, and let $d^L_\infty$ be the path metric of $d_\infty$. There exists a closed orientable hyperbolic $3$-manifold with fundamental group isomorphic to $\Gamma$ if and only if $(X_\infty,\sqrt{3} d^L_\infty)$ is isometric to a closed orientable hyperbolic $3$-manifold $(Y, g_{{\text{\rm{hyp}}}})$ with $\pi_1(Y) \cong \Gamma$.
\end{theorem}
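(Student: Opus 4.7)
The plan is to reduce both directions of the equivalence to Song's rigidity theorem for solutions of the spherical Plateau problem for the fundamental class of a closed hyperbolic $3$-manifold. The backward implication is immediate, as the isometry itself exhibits $(X_\infty, \sqrt{3}\, d^L_\infty)$ as a closed orientable hyperbolic $3$-manifold with fundamental group $\Gamma$.

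For the forward implication, suppose a closed orientable hyperbolic $3$-manifold $Y$ with $\pi_1(Y) \cong \Gamma$ exists. Because $\Gamma$ is torsion-free and hyperbolic, the universal cover $\mathbb{H}^3$ of $Y$ is contractible, so $Y$ is aspherical and is a $K(\Gamma,1)$ space. Singular homology of $Y$ then identifies canonically with the group homology $H_*(\Gamma, \mathbb{Z})$, and the fundamental class $[Y] \in H_3(Y,\mathbb{Z}) \cong \mathbb{Z}$ corresponds to a generator of $H_3(\Gamma,\mathbb{Z})$. This is consistent with the Bestvina--Mess theorem that $\Gamma$ is a $3$-dimensional Poincar\'e duality group when $\partial \Gamma \cong S^2$, so $H_3(\Gamma,\mathbb{Z}) \cong \mathbb{Z}$. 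Thus $h_\Gamma = \pm [Y]$ as classes in group homology.

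I then invoke Song's rigidity result: for a closed orientable hyperbolic $3$-manifold $(Y,g_{\text{hyp}})$, every solution $(X_\infty, d_\infty, S_\infty)$ of the spherical Plateau problem for its fundamental class satisfies $(X_\infty, \sqrt{3}\, d^L_\infty) \cong (Y, g_{\text{hyp}})$ as metric spaces. Since our $h_\Gamma$ is $\pm [Y]$, and reversing orientation produces an isometric current space, Song's theorem applies and delivers the desired isometry.

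The substance of the argument lies entirely in the two invoked results; the original content here is the observation that they can be linked via the identification $h_\Gamma = \pm[Y]$. The principal obstacle, aside from correctly quoting Song, is verifying that the rigidity statement applies to \emph{every} Plateau solution (rather than to some favored minimizing sequence), and that the prescribed combination of passing to the path metric $d^L_\infty$ of the intrinsic-flat limit and rescaling by $\sqrt{3}$ is exactly the correction needed to recover the smooth hyperbolic metric from the Hilbert-Riemannian geometry of $\mathcal{S}(\Gamma)/\Gamma$; asphericity of $Y$ together with Bestvina--Mess then supplies the remaining homological input.
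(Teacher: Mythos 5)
Your proposal is correct and follows essentially the same route as the paper: compute $H_3(\Gamma,\mathbb{Z})\cong\mathbb{Z}$ via Bestvina--Mess, use asphericity of $Y$ to identify the generator $h_\Gamma$ (up to sign/orientation) with the fundamental class $[\mathbb{1}_Y]$, and then invoke Song's Theorem 3.4, with the backward direction being trivial. Your handling of the sign by noting that orientation reversal yields an isometric current space is just the mirror image of the paper's choice of orientation on $Y$, so there is nothing substantive to add.
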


By Theorem \ref{headline-result}, to disprove Cannon's conjecture it would suffice to exhibit the presence of a singularity, failure of compactness, or incompatibility of fundamental group in an appropriate spherical Plateau problem solution. On the other hand, the Hyperbolization Theorem implies these are the only ways Cannon's conjecture can fail: to prove Cannon's conjecture via the approach suggested by Theorem \ref{headline-result} it suffices to show that the Plateau problem solutions $X_\infty$ are closed topological $3$-manifolds with $\pi_1(X_\infty) \cong \Gamma$.

One expects that such an approach would need to navigate substantial challenges: in analogous finite-dimensional Plateau problems, solutions can have singularities of codimension at least two \cite{almgren1983codimension2}. So an affirmative resolution to Cannon's conjecture through Theorem \ref{headline-result} would require stronger regularity results in this setting than are expected in the general case.

  We remark that Theorem 0.4 in \cite{song2022plateau} ensures that the Plateau problem solutions appearing in Theorem \ref{headline-result} are at least nonempty, which rules out the most dramatic possible failure of Cannon's conjecture from this perspective.
 \subsection{Some Questions} We conclude the introduction with some open questions on the spherical Plateau problem, selected with an eye towards developing tools to approach Cannon's conjecture and related topics. A range of further questions appears in \cite{song2022plateau}.
 
 From the perspective of Theorem \ref{headline-result}, the regularity of solutions to the spherical Plateau problem plays a central role in Cannon's conjecture. The development of an appropriate theory analogous to that in finite dimensions (e.g. \cite{almgren1983codimension2}) is expected to be a substantial undertaking: in the finite-dimensional setting, known proofs of regularity results are technical and involved. See \cite{ambrosio2016survey} for a survey of the finite-dimensional case, and \cite{ambrosio2018hilbertreg} for an example of the state of the art in regularity questions for locally mass-minimizing integral currents in infinite dimensions.
 
 In the present case, the existing theory is contained in \cite{song2022plateau}, and addresses questions of (non)-triviality for Plateau problem solutions under conditions on the group $\Gamma$. It is not yet clear what sorts of singularities to expect in spherical Plateau problem solutions for homology classes of hyperbolic groups. So a natural first question is:
 
\begin{question}\label{singularity-examples}
 Study examples of singularity formation in spherical Plateau problem solutions. Develop models for singularities that occur in the spherical Plateau problem for homology classes of hyperbolic groups.
\end{question}

For instance, it would be interesting to understand the possibilities for singularity formation within the class of spherical Dehn fillings studied by Song in \cite{song2022plateau}. A long-term goal would be to classify possible singularities in restricted families of examples.

In a different direction, by Theorem \ref{headline-result} and work of Markovi\'c \cite{Markovic2013criterion}, one class of groups for which spherical Plateau problem solutions are closed hyperbolic $3$-manifolds are groups that satisfy the hypotheses of Cannon's conjecture and contain sufficiently many quasi-convex surface subgroups. Here, ``sufficiently many'' means that for all distinct $ p, q \in \partial \Gamma$, there is a quasi-convex surface subgroup $H$ so that $p$ and $q$ lie in different connected components of $\partial \Gamma - \partial H$. With the hope of developing methods in setting of the spherical Plateau problem, it would be interesting to

\begin{question}
   Prove Markovi\'c's criterion for Cannon's conjecture \cite{Markovic2013criterion} in the framework of the spherical Plateau problem.
\end{question}
 We now turn to questions less directly concerned with regularity. Song shows in \cite{song2022plateau} (Theorem 3.3) that results in \cite{besson1995rigidites} and \cite{besson1996rigidites} imply that after a homothety by a fixed factor, the spherical volume of the fundamental class of a closed connected oriented hyperbolic $n$-manifold agrees with the volume of the manifold. To assess potential counterexamples to Cannon's conjecture, a direction of investigation that seems productive is:
 
 \begin{question}
  Develop methods of placing bounds on spherical volumes of Plateau problem solutions. Produce algorithms that allow for computational tools to find explicit bounds on spherical volumes of examples. 
 \end{question}
 
 One expects for upper bounds to be constructed through building examples, and for demonstration of lower bounds to be more challenging. We remark that one potential, though unlikely-appearing, application of spherical volume bounds is that were an appropriate group homology class's Plateau problem solution to have spherical volume beneath that of the Weeks manifold, results in \cite{gabai2009minimum} would disprove Cannon's conjecture.

Our next two questions address structural features of solutions to the spherical Plateau problem in our setting that are not present in finite-dimensional analogues. To frame our discussion, recall that for a homology class $h \in H_k(M,\mathbb{Z})$, where $(M,g)$ is a Riemannian manifold, a solution to the Plateau problem for $h$ is a mass-minimizing integral $k$-current $T$ in $M$ representing $h$. In our setting, a solution $C_\infty$ to the spherical Plateau problem for $h \in H_k(\Gamma, \mathbb{Z})$ is an integral current \textit{space} that is an \textit{intrinsic}-flat limit of integral currents $C_i$ in $\mathcal{S}(\Gamma)/\Gamma$.

The intrinsic structural questions about Plateau problem solutions studied in the finite-dimensional setting, most prominently about regularity, have direct analogues in our setting as discussed above. Our focus in the next two questions is instead on qualitative structural differences introduced by the change in frameworks.

The first difference is that a solution $C_\infty$ to the spherical Plateau problem is only a limit of integral currents $C_i$ in $\mathcal{S}(\Gamma)/\Gamma$ in an intrinsic sense, and need not be realized by an integral current in $\mathcal{S}(\Gamma)/\Gamma$. In particular, $C_\infty$ is does not need to represent a homology class in a fixed space, in contrast to the finite-dimensional case. This gives rise to some new topological flexibility of $C_\infty$ in our setting.

There are two aspects of the approach to Cannon's conjecture suggested by Theorem \ref{headline-result} related to this flexibility. They seem likely to be able to be addressed independently of the smoothness of Plateau problem solutions. 
 
 \begin{question}\label{partial-steps-question}
  Let $\Gamma$ be a torsion-free hyperbolic group with Gromov boundary $S^2$ and let $h \in H_3(\Gamma, \mathbb{Z})$ be a generator with associated spherical Plateau problem solution $(X_\infty, d_\infty, S_\infty)$. Denote the associated path-metric by $d_\infty^L$.
  \begin{enumerate}[label=(\alph*)]
      \item \label{question-closed} Suppose that the metric space $(X_\infty, \sqrt{3} d_\infty^L)$ is isometric to an orientable hyperbolic 3-manifold $(Y, g_{\text{hyp}})$. Must $Y$ be closed?
      \item \label{question-fundamental-group} Suppose further that $Y$ is closed. Is $\pi_1(Y) \cong \Gamma$?
  \end{enumerate}
 \end{question}
 
  A cause for interest in Question \ref{partial-steps-question} is that affirmative answers would represent partial progress towards a proof of Cannon's conjecture through the perspective of Theorem \ref{headline-result}, and a negative answer to either point would give a disproof of Cannon's conjecture.
  
  We remark here that if $\Gamma$ is the fundamental group of a closed $3$-manifold $Y$ whose geometrization has a hyperbolic part with at least two connected components, the analogue of Statement \ref{question-closed} of Question \ref{partial-steps-question} for the fundamental class $[[Y]]$ of $Y$ is false as a consequence of work of Song (\cite{song2022plateau}, Theorems 3.5-3.6). Instead, any spherical Plateau problem solution $(X_\infty, d_\infty, T_\infty)$ for $[[Y]]$ has $X_\infty$ homeomorphic to the disjoint union of the hyperbolic parts $\{Y_k\}$ of $Y$. On each component $Y_k$, the induced path-metrics of the unique finite-volume hyperbolic metric $g_\text{hyp}^k$ and $d_\infty$ are homothetic. For manifolds $Y$ as above, these metrics have cusps. So $X_\infty$ is not closed.

 The second structural phenomenon in the spherical Plateau problem we consider is that for any infinite torsion-free group $\Gamma$, the classifying space $\mathcal{S}(\Gamma)/\Gamma$ has points of arbitrarily small injectivity radius. The complications introduced by this are made precise by the notion of ``collapsed" and ``non-collapsed'' parts of solutions $C_\infty$ to the spherical Plateau problem. We sketch their definition below; for a full account, see \cite{song2022plateau} Section 5.
 
Let $C_\infty$ be a solution to the spherical Plateau problem for $h \in H_n(\Gamma, \mathbb{Z})$. Song shows that there is a sequence $C_i$ of integral currents in $\mathcal{S}(\Gamma)/\Gamma$ approximating $C_\infty$ that are ``pulled tight'' in the following sense. The underlying integral current spaces of $C_i$ and $C_\infty$ can be realized as integral currents in a common Banach space $Z$ so that $C_i \to C_\infty$ in the flat topology on currents in $Z$, and for any $k \in \mathbb{N}$ the support of the part $C^{\geq 1/k}$ of $C_i$ supported in the $1/k$-thick part of $\mathcal{S}(\Gamma)/\Gamma$ converges in the Hausdorff topology in $Z$ to a compact subset $X_{k,\infty}$ of the support of $C_\infty$. 

The restriction $C_\infty^{> 0}$ of $C_\infty$ to $\bigcup_{k = 1}^\infty X_{k,\infty}$ is called a \textit{non-collapsed part} of $C_\infty$, and the restriction of $C_\infty$ to the complement of $\bigcup_{k = 1}^\infty X_{k,\infty}$ is called a \textit{collapsed part} of $C_\infty$. Here, $C_\infty^{> 0}$ depends on the choice of pulled-tight sequence $C_i$, the Banach space $Z$, and the realizations of $C_i$ and $C_\infty$ inside $Z$.

In this way, a non-collapsed part $C_\infty^{> 0}$ of $C_\infty$ is the union of parts of $C_\infty$ that can be investigated without concern over arbitrarily thin regions of $\mathcal{S}(\Gamma)/\Gamma$. It is expected that the analysis of non-collapsed parts of spherical Plateau problem solutions will be more straightforward than that of collapsed parts (e.g. \cite{song2022plateau} Theorem 5.6). For this reason, we ask:
 
\begin{question}\label{noncollapse-question}
 If $\Gamma$ is a torsion-free hyperbolic group with $\partial \Gamma \cong S^2$, can it always be arranged for a spherical Plateau problem solution for a generator of $H_3(\Gamma,\mathbb{Z})$ to have trivial collapsed part?
\end{question}

We note that the hypotheses of Question \ref{noncollapse-question} are stronger than that of Theorem 0.4 in \cite{song2022plateau}, which ensures that a solution to the spherical Plateau problem can be taken to at least have non-empty non-collapsed part. 

 Another remark in a similar direction to the spirit of Question \ref{partial-steps-question} is that even though the spaces $(X_\infty, \sqrt{3} d_\infty^L)$ appearing in Theorem \ref{headline-result} are unique up to isometry if $\Gamma$ is Kleinian, this is not known to be the case if Cannon's conjecture fails for $\Gamma$. So the presence of non-path-isometric integral current spaces solving the spherical Plateau problem under the hypotheses of Theorem \ref{headline-result} would invalidate Cannon's conjecture. In contrast to the matters considered in Question \ref{partial-steps-question}, though, it is not necessary to prove uniqueness of path-isometry classes of these solutions to the spherical Plateau problem to establish Cannon's conjecture through the perspective suggested by Theorem \ref{headline-result}. This motivates the
 
 \begin{question}\label{uniqueness-question}
    Let $\Gamma$ and $h$ be as in Question \ref{partial-steps-question}, and let $(X_{\infty}, d_{\infty}, S_{\infty}), (Z_\infty, \delta_\infty, T_\infty)$ be two solutions to the spherical Plateau problem for $h$ with path-metrics $d_\infty^L, \delta_\infty^L$ respectively. Are $(X_\infty ,d_\infty^L)$ and $(Z_\infty,\delta_\infty^L)$ isometric? 
 \end{question}
 

  The only known technique for proving uniqueness features of solutions to the spherical Plateau problem is to examine barycenter maps between approximate solutions and model spaces (e.g. \cite{song2022plateau}, proofs of Theorems 3.4, 3.6, 4.3). This approach is limited by the requirement of prior knowledge of well-behaved model spaces.
  
  In the absence of a distinguished model space, uniqueness may well fail. For instance, if $S$ is a closed, orientable surface of genus $g \geq 2$, $\Gamma = \pi_1(S)$, and $h$ generates $H_2(\Gamma, \mathbb{Z}) \cong \mathbb{Z}$, then Poisson embeddings produce all hyperbolic metrics on $S$ (after homothety), and appropriate limits, among the isometry types of solutions to the spherical Plateau problem for $h$ (see \cite{song2022plateau}, Section 3).
  
  Distinguished models do not appear to be available in our setting, though the flexibility of hyperbolic metrics in dimension $2$ that gives rise to non-uniqueness in the above example is also absent. So to positively resolve Question \ref{uniqueness-question} without proving Cannon's conjecture would require a new method.
 
 Finally, one direction of inquiry communicated to the authors by Song \cite{song2022email} is to determine to what extent the spherical Plateau problem solutions for a group $\Gamma$ inform the algebraic structure of $\Gamma$. For instance, one question related to Question \ref{partial-steps-question}, Statement \ref{question-fundamental-group} in this direction is:
 \begin{question}
  Suppose that $\Gamma$ is a countable group and $h \in H_n(\Gamma, \mathbb{Z})$ has a path-connected spherical Plateau problem solution $(X_\infty, d_\infty, S_\infty)$ with path-metric $d_\infty^L$ isometric to the path-metric of a closed Riemannian $n$-manifold $(M, g)$ of constant negative sectional curvature. Must $\Gamma$ contain a subgroup isomorphic to $\pi_1(M)$?
 \end{question}
 
  \vspace{0.25cm}
 \par \noindent \textbf{Acknowledgements.} This note exists thanks to an inspiring talk of Antoine Song and subsequent conversations at the conference Recent Developments on Geometric Measure Theory and its Applications (on the occasion of the retirement of Bob Hardt). A.N. thanks Mike Wolf and T.C-W thanks Alan Reid for their support, and the authors thank Christos Mantoulidis, Chris Leininger, and Antoine Song for thoughtful comments on previous drafts of this note.
\section{The Spherical Plateau Problem}\label{spherical-plateau-summary}

We briefly review the spherical Plateau problem in the torsion-free setting, as defined in \cite{song2022plateau}, where proofs and detailed definitions pertaining to the following can be found.

Let $\Gamma$ be a countably infinite torsion-free discrete group, $\mathbf{H}$ a separable Hilbert space, and denote the unit sphere in $\ell^2(\Gamma, \mathbf{H})$ by $\mathcal{S}(\Gamma)$. The left-regular action of $\Gamma$, specified by $(\gamma f)(v) := f(\gamma^{-1}v)$, is a proper and free action by isometries of $\Gamma$ on the contractible space $\mathcal{S}(\Gamma)$, so that $\mathcal{S}(\Gamma)/\Gamma$ is a classifying space for $\Gamma$.

Fix a homology class $h \in H_k(\Gamma, \Z)$, let $\mathscr{C}(h)$ denote the collection of $k$-dimensional integral currents in the sense of \cite{ambrosio2000currents} representing $h$, and denote the mass of a current $C \in \mathscr{C}(h)$ by $\mathbf{M}(C)$. Define the spherical volume $\text{SphereVol}(h)$ of $h$ to be $\inf\limits_{C \in \mathscr{C}(h)} \mathbf{M}(C)$.

We remark that in the case where $h$ corresponds to the fundamental class $[\mathbb{1}_M]$ of a closed oriented Riemannian manifold $(M, g)$, this definition is equivalent to the spherical volume of Besson, Courtois, and Gallot \cite{besson1991entropie} (\cite{song2022plateau}, Remark 2.6). The essential observation here is that the setup of \cite{besson1991entropie} can be phrased independently of reference geometric data: if $D_M$ is a fundamental domain for the universal cover $(\widetilde{{M}}, \widetilde{g})$ of $(M,g)$, then $L^2(\widetilde{{M}}, d\text{Vol}_{\widetilde{g}}) \cong \ell^2(\pi_1(M), L^2(D_M, d\text{Vol}_{\widetilde{g}}))$ and $\ell^2(\pi_1(M), \mathbf{H})$ are $\pi_1(M)$-equivariantly isometrically identified. 

A solution to the spherical Plateau problem is defined to be an integral current space $C_\infty$ that is the intrinsic-flat limit (see \cite{sormani2011intrinsic}) of a sequence $C_i \in \mathscr{C}(h)$ so that $\lim\limits_{i\to\infty}\mathbf{M}(C_i) = \text{SphereVol}(h)$.

Part of the data of an integral current space $C$ is a complete metric space $(X, d)$, which for appropriate path-connected spaces $X$ induces a path metric $d^L$ on $X$. The main theorem we shall use from \cite{song2022plateau} is that if $(Y, {g}_{\text{hyp}})$ is a closed hyperbolic $3$-manifold, then for any Plateau problem solution $C_\infty$ corresponding to the fundamental class $[\mathbb{1}_Y]$, the path-metric $d_\infty^L$ associated to the metric space $(X_\infty, d_{\infty})$ is a metric on $X_\infty$ and $(X_\infty,\sqrt{3} d_\infty^L)$ is isometric to $(Y, g_{\text{hyp}})$ (Theorem 3.4). Song's proof of this is based on an analysis of barycenter maps associated to the currents $C_i$ converging to $C_\infty$, as bounds on the Jacobians of these barycenter maps grow sharp (c.f. \cite{besson1995rigidites}).
\section{Proofs}

The contribution of this note is the observation that because group homology classes analogous to fundamental classes of closed oriented hyperbolic $3$-manifolds exist for all torsion-free hyperbolic groups $\Gamma$ satisfying the hypotheses of Cannon's conjecture, Song's work implies that the Plateau problem solutions for these homology classes coincide up to homothety with the closed oriented hyperbolic $3$-manifold $Y$ with $\pi_1(Y) \cong \Gamma$ whenever such a manifold exists.

To begin, we compute the group homology $H_3(\Gamma, \mathbb{Z})$ for a group $\Gamma$ satisfying the hypotheses of Theorem \ref{headline-result}.

\begin{lemma}\label{homology-computation}
Let $\Gamma$ be a torsion-free hyperbolic group with Gromov boundary $S^2$. Then $H_3(\Gamma, \mathbb{Z}) \cong \mathbb{Z}$.
\end{lemma}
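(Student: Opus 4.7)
The plan is to identify $\Gamma$ as a Poincar\'e duality group of dimension $3$ using the Bestvina-Mess theorem, and then to extract $H_3$ via group-cohomological duality.

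First, I will apply the main theorem of \cite{bestvina1991groups}, which for a torsion-free hyperbolic group $\Gamma$ gives a canonical isomorphism
\[
H^k(\Gamma, \mathbb{Z}\Gamma) \cong \check{H}^{k-1}(\partial \Gamma, \mathbb{Z})
\]
for all $k \geq 0$. Substituting $\partial \Gamma \cong S^2$ yields $H^3(\Gamma, \mathbb{Z}\Gamma) \cong \mathbb{Z}$ and $H^k(\Gamma, \mathbb{Z}\Gamma) = 0$ for $k \neq 3$, so $\Gamma$ is a Poincar\'e duality group of dimension $3$ with dualizing module $D := H^3(\Gamma, \mathbb{Z}\Gamma) \cong \mathbb{Z}$ as an abelian group.

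Next, I will apply Poincar\'e duality for $PD(3)$-groups (in the sense of Bieri-Eckmann) to obtain $H_3(\Gamma, \mathbb{Z}) \cong H^0(\Gamma, D) = D^\Gamma$, where the $\Gamma$-action on $D$ is the one induced by the $\Gamma$-action on $\check{H}^2(\partial\Gamma, \mathbb{Z}) \cong \mathbb{Z}$ coming from the boundary action on $S^2$. If this induced action is trivial, then $D^\Gamma \cong \mathbb{Z}$ and the lemma follows.

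The main obstacle is therefore to verify that the $\Gamma$-action on $D$ is trivial — equivalently, that $\Gamma$ is an \emph{orientable} $PD(3)$-group, i.e., acts on $\partial \Gamma \cong S^2$ by orientation-preserving homeomorphisms. This orientability is consistent with the setting of Cannon's conjecture (as in Theorem \ref{headline-result}), and is a natural hypothesis to include in the lemma. Alternatively, one could try to establish it intrinsically from the convergence-group dynamics of the $\Gamma$-action on $S^2$ together with torsion-freeness; in any case, once the action on $D$ is trivial, the identification $H_3(\Gamma, \mathbb{Z}) \cong \mathbb{Z}$ is immediate.
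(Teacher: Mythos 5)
Your argument follows the same route as the paper's: Bestvina--Mess (Corollary 1.3(c) of \cite{bestvina1991groups}, equivalently the isomorphism $H^k(\Gamma,\mathbb{Z}\Gamma)\cong\check{H}^{k-1}(\partial\Gamma,\mathbb{Z})$ you quote) shows that $\Gamma$ is a $PD(3)$ group, and duality then identifies the top homology. The difference is that you track the dualizing module $D\cong\check{H}^{2}(\partial\Gamma,\mathbb{Z})$ and its $\Gamma$-action explicitly, and you stop exactly at the step you cannot carry out: showing that this action is trivial. As a proof of the lemma as stated this is a genuine gap, since $H_3(\Gamma,\mathbb{Z})\cong D^\Gamma$ is $\mathbb{Z}$ when the action is trivial and $0$ otherwise, and proposing to add orientability as a hypothesis, or to derive it ``from the dynamics'' without doing so, does not close it.

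That said, the point you isolate is real, and it is also present, silently, in the paper's own proof: the ``standard computation'' there ($H_0(\Gamma,\mathbb{Z})\cong\mathbb{Z}$, hence by duality $H^3(\Gamma,\mathbb{Z})\cong\mathbb{Z}$, then the Universal Coefficient Theorem) uses untwisted duality, i.e.\ it already assumes the dualizing module is trivial as a $\Gamma$-module. Without that assumption the statement fails: if $N$ is a closed non-orientable hyperbolic $3$-manifold, then $\Gamma=\pi_1(N)$ is torsion-free and hyperbolic with $\partial\Gamma\cong S^2$ (and the boundary action is automatically effective, since for a torsion-free non-elementary hyperbolic group the kernel of the boundary action is a finite normal subgroup), yet orientation-reversing elements act by $-1$ on $\check{H}^2(S^2,\mathbb{Z})$, so $\Gamma$ is a non-orientable $PD(3)$ group and $H_3(\Gamma,\mathbb{Z})=H_3(N,\mathbb{Z})=0$. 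So your instinct is the correct fix: one must either add the hypothesis that $\Gamma$ acts trivially on $\check{H}^2(\partial\Gamma,\mathbb{Z})$ (equivalently, that $\Gamma$ is an orientable $PD(3)$ group), or weaken the conclusion to $H_3(\Gamma,\mathbb{Z})$ being $\mathbb{Z}$ or $0$ according to orientability; the same caveat propagates to Theorem \ref{headline-result}, where a generator of $H_3(\Gamma,\mathbb{Z})$ is taken. Modulo that shared issue, your duality bookkeeping is correct and matches the paper's argument.
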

\begin{proof}
By Corollary 1.3(c) of \cite{bestvina1991groups}, since $\partial \Gamma$ is $S^2$, $\Gamma$ is an integral Poincar\'e duality group of dimension 3. The rest is a standard computation: $H_0(\Gamma, \mathbb{Z}) \cong \mathbb{Z}$ so by duality $H^3(\Gamma, \mathbb{Z}) \cong \mathbb{Z}$, and by the Universal Coefficient Theorem the top homology group is $\mathbb{Z}$ as well. 
\end{proof}

We now establish our result.

\begin{proof}[Proof of Theorem \ref{headline-result}.]
Let $\Gamma$ be a group satisfying the hypotheses of Theorem \ref{headline-result}. Lemma \ref{homology-computation} ensures that $H_3(\Gamma, \Z) \cong \Z$; let $h$ be a generator of $H_3(\Gamma,\Z)$ and let $C_\infty$ be a spherical Plateau problem solution for $h$.

Suppose that $(Y, g_{\text{hyp}})$ is a closed orientable hyperbolic $3$-manifold with $\pi_1(Y) \cong \Gamma$. As $Y$ is aspherical, its homology is isomorphic to that of $\mathcal{S}(\Gamma)/\Gamma$. Choose an orientation on $Y$ so that the isomorphism $\Gamma \cong \pi_1(Y)$ induces an equivariant isometry $\mathcal{S}(\Gamma) \to \mathcal{S}(\pi_1(Y))$ whose induced map on $H_3(\mathcal{S}(\Gamma)/\Gamma, \Z)$ maps $h$ to the class corresponding to $[\mathbb{1}_Y]$. Now apply Theorem 3.4 in \cite{song2022plateau}. The other implication is trivial.
\end{proof}

\bibliography{main}

\bigbreak\noindent Department of Mathematics,
\smallbreak\noindent Rice University,
\smallbreak\noindent Houston, TX, 77005,
\smallbreak\noindent Email: tcw@rice.edu, adn5@rice.edu
\end{document}